\title{An independence of the MIN from the PHP principle}
\author{Mykyta Narusevych\footnote{Supported by the Charles University project PRIMUS/21/SCI/014, Charles University Research Centre program No. UNCE/24/SCI/022 and GA UK project No. 246223}}
\date{Faculty of Mathematics and Physics \\
Charles University\footnote{Sokolovsk\'a 83, Prague, 186 75, The Czech Republic}}
\begin{document}

\newcommand{\MIN}{\textsf{MIN}(\triangleleft)}
\newcommand{\T}{\textsf{T}^1_2}
\newcommand{\Talpha}{\textsf{T}^1_2(\alpha)}
\newcommand{\Ttwo}{\textsf{T}^2_2}
\newcommand{\Tmin}{\textsf{T}^1_2(\triangleleft)}
\newcommand{\Smin}{\textsf{S}^1_2(\triangleleft)}
\newcommand{\Dmin}{\Delta^b_1(\triangleleft)}
\newcommand{\M}{\mathbb{M}}
\newcommand{\I}{\mathbb{I}}
\renewcommand{\S}{\mathcal{S}}
\renewcommand{\O}{\mathbb{O}}
\newcommand{\TOUR}{\textsf{TOUR}}
\newcommand{\dWPHP}{\textsf{dWPHP}}
\newcommand{\deff}[1]{\textbf{#1}}
\newcommand{\el}{\operatorname{el}}
\newcommand{\PHPD}[2]{\textsf{PHP}^{#1}_{#2}(\Dmin)}

\newcommand{\revisionnote}[1]{%
\par\medskip\noindent\fbox{\begin{minipage}{0.94\linewidth}
\textbf{Revision marker.} #1
\end{minipage}}\par\medskip}

\newcommand{\humanjudgement}[1]{%
\par\medskip\noindent\fbox{\begin{minipage}{0.94\linewidth}
\textbf{Human judgement required.} #1
\end{minipage}}\par\medskip}

\theoremstyle{plain}
\newtheorem{theorem}{Theorem}[section]
\newtheorem{lemma}[theorem]{Lemma}
\newtheorem{corollary}[theorem]{Corollary}
\newtheorem{proposition}[theorem]{Proposition}
\newtheorem*{fact}{Fact}

\theoremstyle{definition}
\newtheorem{definition}[theorem]{Definition}
\newtheorem{problem}[theorem]{Problem}

\theoremstyle{remark}
\newtheorem*{remark}{Remark}
\newtheorem*{conjecture}{Conjecture}

\maketitle

\begin{abstract}
The minimization principle $\MIN$, studied in bounded arithmetic in \cite{chiarikrajicek98}, says that a strict linear ordering $\triangleleft$ on any finite interval $[0,\dots,n)$ has the least element. We prove that the relativized bounded arithmetic theory $\Tmin$ augmented by pigeonhole-principle instances for all $\Dmin$ formulas does not prove $\textsf{MIN}(\triangleleft)$.
\end{abstract}

\section{Introduction}

One of the central themes of propositional proof complexity is to understand the relative strength of proof systems and to prove strong lower bounds on proof length. Bounded arithmetic provides a uniform framework for studying such questions and often allows one to reduce proof-complexity lower bounds to logical independence results over corresponding arithmetic theories; see \cite[Section 8]{krajicek19}.

Motivating open problems for the present work include the $d$ versus $d+1$ problem \cite[Section 12.2]{krajicek95} and the $R(\log)$ problem \cite[Problem 13.7.1]{krajicek19}. From the bounded arithmetic perspective, the former asks for separations between adjacent theories $\textsf{T}^{i+1}_2(\alpha)$ and $\textsf{T}^i_2(\alpha)$ by formulas of low complexity. The latter can be viewed as a particularly basic special case: it asks for a $\Sigma^b_1(\alpha)$ formula provable in the full hierarchy $\textsf{T}_2(\alpha)$ but not in the low fragment $\textsf{T}^2_2(\alpha)$. See Section~\ref{preliminaries} for the relevant definitions. At the same time, the lowest fragment $\textsf{T}^1_2(\alpha)$ is relatively well understood, especially due to the theorem of Riis \cite{riis93}, \cite[Theorem 11.3.2]{krajicek95}. We believe that one possible route towards the above problems is to develop methods for proving independence from non-trivial extensions of $\textsf{T}^1_2(\alpha)$.

The minimization principle was introduced in bounded arithmetic in \cite{chiarikrajicek98} in the study of definable search problems and witnessing theorems. It is a $\Sigma^b_2$ formula $\MIN$ expressing that if $\triangleleft$ is a strict linear ordering of the set $\{0,\dots,n-1\}$, then it has a least element. It defines a total $\Sigma^p_2$ search problem: either certify that $\triangleleft$ is not a strict linear order, or find its least element.

The pigeonhole principle $\textsf{PHP}$ has an especially rich history in bounded arithmetic and proof complexity; see, for example, \cite[Section 12.5]{krajicek95}, \cite[Section 15]{krajicek19}, and \cite{ajtai88}. Given a binary relation $R$, it states that $R$ cannot define the graph of an injective mapping from $\{0,\dots,n\}$ into $\{0,\dots,n-1\}$.

In this paper, we study the extension of $\textsf{T}^1_2(\triangleleft)$ by instances of the pigeonhole principle for polynomial-time, $\triangleleft$-oracle computable graphs of functions, equivalently for graphs defined by $\Dmin$ formulas. We show that this theory does not prove the minimization principle for $\triangleleft$:
\[
    \Tmin + \textsf{PHP}(\Dmin) \nvdash \forall n\,\MIN_n.
\]
Since $\MIN$ is provable in $\textsf{T}^2_2(\triangleleft)$ \cite{chiarikrajicek98}, this gives a principle provable in $\textsf{T}^2_2(\triangleleft)$ but not in $\Tmin+\textsf{PHP}(\Dmin)$. Conversely, the ordinary pigeonhole principle is not provable even in strong bounded arithmetic theory $\textsf{T}_2(\alpha)$ \cite{krajicekpudlakwoods95,pitassibeameimpagliazzo95}. Thus, in the relevant relativized setting, the extension $\Tmin+\textsf{PHP}(\Dmin)$ and the theory $\textsf{T}^2_2(\triangleleft)$ are incomparable in strength.

It is worth noting that if one replaces the pigeonhole principle by its weak version, the weak pigeonhole principle \cite[Section 11.2]{krajicek95}, the corresponding separation follows directly from \cite{muller20}. There it is shown that, over $\Talpha$, no \textit{strong} combinatorial principle, such as the minimization principle, follows from \textit{weak} combinatorial principles, such as the weak pigeonhole principle.

Our primary technique is a model-theoretic construction reminiscent of \textit{forcing}. Conditions are small pieces of the oracle relation, and the desired expansion is obtained by meeting countably many requirements. We present the construction as a game with three kinds of players. One player makes the oracle order falsify $\MIN$, one player preserves induction, and one player ensures all $\Dmin$ pigeonhole-principle instances. The main technical point is that the pigeonhole-principle requirements can always be met. This is reduced to a combinatorial statement about arrays of partial oracle conditions.

We present two additional separations to further demonstrate how the same combinatorial method can be applied. Concretely, we show how to obtain models of $\Talpha$ plus the $\Delta^b_1(\alpha)$ pigeonhole principle that fail the dual weak pigeonhole principle and the tournament principle; see Section~\ref{applications} for definitions.

The paper is organized as follows.

In Section~\ref{preliminaries}, we introduce the notions relevant to the paper: the relativized bounded arithmetic theory $\Talpha$, the minimization principle $\textsf{MIN}$, and the pigeonhole principle $\textsf{PHP}$.

In Section~\ref{model}, we outline a game-theoretic construction of a model of $\Tmin$ satisfying the $\Dmin$ pigeonhole principle. We also briefly discuss how the construction can be viewed through the lens of forcing.

In Section~\ref{analysis}, we prove the main combinatorial theorems needed to finish the model construction. We also explain how the construction can be modified to produce a model with $\triangleleft$ interpreted as a dense linear ordering.

In Section~\ref{applications}, we sketch additional separations using the machinery developed in the previous sections. In particular, we discuss the tournament principle and the dual weak pigeonhole principle. Finally, we formulate an open problem related to the methods used in the paper.

\section{Preliminaries}
\label{preliminaries}

\subsection{Bounded Arithmetic}

In this subsection, we provide a brief exposition of bounded arithmetic. For further details, see \cite{buss85} and \cite{krajicek95}.

We study the relativized bounded arithmetic theory $\Tmin$, where $\triangleleft$ is a binary relation symbol. In addition to $\triangleleft$, the language contains
\begin{itemize}
    \item constant symbols $0$ and $1$,
    \item unary operations $\lfloor\cdot/2\rfloor$ and $|\cdot|$,
    \item binary operations $+$, $\cdot$, and $\#$,
    \item the binary relation $\leq$.
\end{itemize}
The intended meanings of $0,1,\lfloor\cdot/2\rfloor,+,\cdot,\leq$ are standard. The symbol $|\cdot|$ denotes bit-length, namely $|0|=0$ and, for $x>0$, $|x|=\lfloor\log_2 x\rfloor+1$. The symbol $\#$ denotes the binary \textit{smash function}, standardly defined as $2^{|x|\cdot |y|}$.

A particular theory in this language is $\textsf{BASIC}$; see \cite[Definition 5.2.1]{krajicek95}. This finite theory describes the basic properties of the involved symbols, such as commutativity of $+, \cdot$, and recursive properties of $\#$.

A \deff{bounded formula} is a formula logically equivalent to one containing only bounded quantifiers, that is, quantifiers of the form $\forall x\leq t$ or $\exists x\leq t$, where the term $t$ does not contain $x$.

The class $\Delta^b_0(\triangleleft)$ denotes sharply bounded formulas in the language with the additional relation symbol $\triangleleft$. Here sharply bounded means that all quantified variables are bounded by terms of the form $|t|$.

The class $\Sigma^b_1(\triangleleft)$ is defined as the smallest class of bounded formulas extending $\Delta^b_0(\triangleleft)$ and satisfying the following closure properties:
\begin{itemize}
    \item if $\varphi,\psi\in\Sigma^b_1(\triangleleft)$, then $\varphi\land\psi$ and $\varphi\lor\psi$ are in $\Sigma^b_1(\triangleleft)$;
    \item if $\varphi\in\Sigma^b_1(\triangleleft)$ and $t$ is a term, then $\forall x\leq |t|\,\varphi$ and $\exists x\leq |t|\,\varphi$ are in $\Sigma^b_1(\triangleleft)$;
    \item if $\varphi\in\Sigma^b_1(\triangleleft)$ and $t$ is a term, then $\exists x\leq t\,\varphi$ is in $\Sigma^b_1(\triangleleft)$.
\end{itemize}
The class $\Pi^b_1(\triangleleft)$ is the set of formulas $\varphi$ such that $\neg\varphi\in\Sigma^b_1(\triangleleft)$.

For a formula $\varphi(x,\overline y)$, let $\textsf{PIND}(\varphi)$ denote the sentence
\begin{align*}
    \forall\overline y\,\bigl[(\varphi(0,\overline y)\land
    \forall x(\varphi(\lfloor x/2\rfloor,\overline y)\to\varphi(x,\overline y)))
    \to \forall x\,\varphi(x,\overline y)\bigr].
\end{align*}
As an axiom scheme, this is called \deff{polynomial induction}. The usual induction scheme, in which the inductive step goes from $x$ to $x+1$, is denoted by $\textsf{IND}$.

The theory $\textsf{S}^1_2(\triangleleft)$ is $\textsf{BASIC}$ augmented by polynomial induction for $\Sigma^b_1(\triangleleft)$ formulas. The theory $\Tmin$ extends $\textsf{S}^1_2(\triangleleft)$ by the usual induction scheme for $\Sigma^b_1(\triangleleft)$ formulas.

Finally, $\Dmin$ denotes the class of formulas $\varphi\in\Sigma^b_1(\triangleleft)$ for which there is a formula $\psi\in\Sigma^b_1(\triangleleft)$ such that
\[
    \textsf{S}^1_2(\triangleleft)\vdash \varphi\leftrightarrow \neg\psi.
\]
Note that the above formula class is often referred to as $\Dmin$ \textit{in} $\Smin$ to emphasize the base theory over which the equivalence is established.

\begin{theorem}[{Buss \cite{buss85}; see also \cite[Section 7]{krajicek95}}]
\label{buss}
    A relation $R(\overline x)$ on numbers is computable in polynomial time with $\triangleleft$-oracle access if and only if it is definable by a $\Delta^b_1(\triangleleft)$ formula $\varphi(\overline x)$.
\end{theorem}

\subsection{$\textsf{MIN}$ and $\textsf{PHP}$ principles}
\label{subsection min}

We write $[0,\dots,n)$ for the set $\{0,\dots,n-1\}$.

The following sentence $\MIN_n$ expresses that, assuming $\triangleleft$ defines a strict linear ordering on $[0,\dots,n)$, there exists a $\triangleleft$-least element of $[0,\dots,n)$:
\begin{gather*}
    \exists x<n\,(x\triangleleft x)
    \\
    \lor
    \\
    \exists x,y<n\,(x\neq y\land x\triangleleft y\land y\triangleleft x)
    \\
    \lor
    \\
    \exists x,y<n\,(x\neq y\land \neg(x\triangleleft y)\land \neg(y\triangleleft x))
    \\
    \lor
    \\
    \exists x,y,z<n\,(x\triangleleft y\land y\triangleleft z\land \neg(x\triangleleft z))
    \\
    \lor
    \\
    \exists x<n\,\forall y<n\,(x=y\lor x\triangleleft y).
\end{gather*}
This principle was introduced in bounded arithmetic in \cite{chiarikrajicek98}, where it was shown that $\MIN_n$ is provable in $\textsf{T}^2_2(\triangleleft)$ but is not provable in $\Tmin$.

Let $\varphi(x,y,\overline z)$ be a formula. We denote by $\textsf{PHP}(\varphi)$ the pigeonhole principle for $\varphi$. It is the sentence
\[
    \forall\overline z\,\forall u,v\,(u>v\to \psi(u,v,\overline z)),
\]
where $\psi(u,v,\overline z)$ is
\begin{gather*}
    \exists x<u\,\forall y<v\,\neg\varphi(x,y,\overline z)
    \\
    \lor
    \\
    \exists x,x'<u\,\exists y<v\,(x\neq x'\land \varphi(x,y,\overline z)\land \varphi(x',y,\overline z))
    \\
    \lor
    \\
    \exists x<u\,\exists y,y'<v\,(y\neq y'\land \varphi(x,y,\overline z)\land \varphi(x,y',\overline z)).
\end{gather*}
In words, this says that $\varphi$ cannot be the graph of an injective function from $u$ pigeons into $v$ holes when $u>v$.

The pigeonhole principle is famously known to be unprovable in the full bounded arithmetic theory $\textsf{T}_2(R)$ \cite{ajtai88,krajicekpudlakwoods95,pitassibeameimpagliazzo95}.

We denote the set of instances $\textsf{PHP}(\varphi)$ for all $\Dmin$ formulas $\varphi$ by $\textsf{PHP}(\Dmin)$. The main result of the paper is the following theorem.

\begin{theorem}
\label{main}
\[
    \Tmin+\textsf{PHP}(\Dmin)\nvdash \forall n\,\MIN_n.
\]
\end{theorem}

\section{Model construction}
\label{model}

\subsection{The game $G_\O$}

In this subsection, we construct a model expansion, later proved to satisfy $\Tmin+\textsf{PHP}(\Dmin)$ and to violate $\MIN_n$ for some $n$.

We start with a countable nonstandard model $\M$ of true arithmetic in the language $\{0,1,+,\cdot,|\cdot|,\lfloor\cdot/2\rfloor,\#,\leq\}$. Pick a nonstandard number $n$ in $\M$ and let $\I$ be the substructure of $\M$ with domain
\[
    \{m: m\leq 2^{|n|^c}\text{ for some standard }c\}.
\]

Our goal is to interpret $\triangleleft$ in $\I$ suitably. This is achieved through a step-by-step construction, which we view as a game played between three parties, denoted by $P_{\textsf{MIN}}$, $P_{\textsf{IND}}$, and $P_{\textsf{PHP}}$. The game runs for countably many steps. Its moves are sequences definable in the ground model $\M$, while the play itself (i.e., the limit sequence of all moves) is external and need not be definable in $\M$.

The game starts with $\triangleleft$ being the empty relation. The players subsequently extend $\triangleleft$ by adding at most $|n|^c$ many elements at each move, for some standard $c$, with $c$ possibly varying between the moves. The invariant is that $\triangleleft$ is always a strict linear ordering of a subset of $[0,\dots,n)$.

\begin{definition}
    Let $\O$ be the set of sequences of distinct numbers from $[0,\dots,n)$ of length at most $|n|^c$, for some standard constant $c$.

    Each $o\in\O$ induces a strict partial ordering of $[0,\dots,n)$, denoted by $\triangleleft_o$, in the obvious way: the domain consists of the numbers appearing in $o$, and their ordering is the order in which they are listed in $o$. We write $\el(o)$ for the domain of $o$.

    We say that $o'\in\O$ \deff{extends} $o\in\O$, denoted $o'\preceq o$ or $o\succeq o'$, if $\triangleleft_{o'}\supseteq \triangleleft_o$. Thus $\preceq$ is the forcing order: stronger conditions are smaller.

    We say that $o'\in\O$ \deff{is compatible with} $o\in\O$, denoted $o'\|o$, if $\triangleleft_{o'}$ is compatible with $\triangleleft_o$ as a strict ordering. Equivalently, there is $o''\in\O$ extending both $o$ and $o'$. Incompatibility is denoted by $o\perp o'$.
\end{definition}

\begin{definition}
    The \deff{game} $G_\O$ is played by three players $P_{\textsf{MIN}}$, $P_{\textsf{IND}}$, and $P_{\textsf{PHP}}$ for countably many steps.

    The player $P_{\textsf{MIN}}$ plays at stages $k\equiv 0\pmod 3$, the player $P_{\textsf{IND}}$ plays at stages $k\equiv 1\pmod 3$, and the player $P_{\textsf{PHP}}$ plays at stages $k\equiv 2\pmod 3$.

    Let $o_0$ be the empty sequence. At stage $k$, the respective player is presented with $o_k\in\O$, chooses $o_{k+1}\preceq o_k$ from $\O$, and passes $o_{k+1}$ to the next player.

    A \deff{strategy} $\S$ is a function that, on input $o\in\O$, outputs $o'\in\O$ such that $o'\preceq o$.

    Fixing strategies $\S_{\textsf{MIN}}$, $\S_{\textsf{IND}}$, and $\S_{\textsf{PHP}}$ induces a chain
    \[
        o_0\succeq o_1\succeq o_2\succeq\cdots.
    \]
    This chain induces the strict ordering
    \[
        \triangleleft_{\S_{\textsf{MIN}},\S_{\textsf{IND}},\S_{\textsf{PHP}}}
        :=\bigcup_i \triangleleft_{o_i}.
    \]
    We drop the subscripts when the strategies are clear from context.

    The expansion of $\I$ by interpreting $\triangleleft$ as this union is denoted by $\I_\triangleleft$.

    The goal of $P_{\textsf{MIN}}$ is for the resulting interpretation of $\triangleleft$ to be a strict linear order of $[0,\dots,n)$ with no least element, no matter how the other players play.

    The goal of $P_{\textsf{IND}}$ is for the resulting structure $\I_\triangleleft$ to satisfy induction for $\Sigma^b_1(\triangleleft)$ formulas, no matter how the other players play.

    The goal of $P_{\textsf{PHP}}$ is for the resulting structure $\I_\triangleleft$ to satisfy $\textsf{PHP}(\Dmin)$, no matter how the other players play.

    A strategy of one of the players is called \deff{winning} if the corresponding goal is achieved regardless of how the other players play.
\end{definition}

\subsection{Winning strategies}

\begin{proposition}
\label{Pmin winning strategy}
    There exists a strategy $\S_{\textsf{MIN}}$ winning for $P_{\textsf{MIN}}$.
\end{proposition}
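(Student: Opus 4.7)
The plan is to have $P_{\text{MIN}}$'s strategy at each of her turns perform two jobs in parallel: install a new $\prec$-minimum strictly below the current minimum, and run a bookkeeping pass that adds previously untouched elements of $[0,\dots,n)$ to the ordering.

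Explicitly, when $\S_{\text{MIN}}$ is handed $o_k$, it first locates the current $\prec_{o_k}$-minimum $a$, picks a fresh element $b \in [0,\dots,n) \setminus \mathrm{dom}(\prec_{o_k})$ (which must exist since $|o_k| \leq |n|^c \ll n$), and extends $o_k$ to $o_{k+1}$ by placing $b$ strictly below $a$, and hence below every previously ordered element. On the same move, $\S_{\text{MIN}}$ also inserts into $o_{k+1}$ a block of elements from a fixed enumeration of $[0,\dots,n)$ (available in $\M$) that have not yet entered the ordering, putting them anywhere in the chain compatible with the invariant.

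Verification splits into two parts. For \emph{no minimum}: every $a$ that ever appears in $\mathrm{dom}(\prec)$ is strictly dominated below by the fresh $b$ installed at the next $P_{\text{MIN}}$-turn, so in the limit $\prec = \bigcup_i \prec_{o_i}$ no element of $[0,\dots,n)$ is the $\prec$-minimum. For \emph{totality}: the bookkeeping sweeps through the chosen enumeration of $[0,\dots,n)$ in blocks, so every element eventually enters $\mathrm{dom}(\prec)$, making $\prec$ total.

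The main obstacle is the bookkeeping step, since a single move has access only to sequences of length $|n|^c$ for standard $c$, while $[0,\dots,n)$ has $n \approx 2^{|n|}$ elements. Overcoming this likely requires running the game over an index set longer than $\omega$ inside $\M$, or exploiting the fact that to refute $\mathrm{MIN}(\prec)$ internally in $\I$, $\prec$ only needs to be total on the pairs $\I$ actually sees as witnesses. The precise choice of this setup is an arrangement question rather than a strategic one, and once it is fixed, the recipe above clearly wins no matter what $P_{\text{IND}}$ and $P_{\text{PHP}}$ play.
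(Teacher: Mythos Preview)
Your strategic idea is the same as the paper's: at each $P_{\text{MIN}}$-turn, place a fresh element strictly below everything already ordered, and run bookkeeping through an enumeration of $[0,\dots,n)$ to ensure totality. The paper in fact fuses these two jobs into one: it maintains a queue of all elements of $[0,\dots,n)$, and on input $o=(a_1,\dots,a_m)$ simply takes the \emph{first} element $a$ of the queue not already in $o$ and outputs $(a,a_1,\dots,a_m)$. That single move both advances the bookkeeping by one and installs a new minimum.

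The gap in your write-up is the last paragraph. There is no obstacle. You are measuring the size of $[0,\dots,n)$ from inside $\M$, where it is indeed $n\approx 2^{|n|}$, but the game $G_\O$ is an \emph{external} $\omega$-game and $\M$ was chosen countable. Hence $[0,\dots,n)$ is externally a countable set, and an external countable queue processed one element per $P_{\text{MIN}}$-turn exhausts it after $\omega$ turns. There is no need to add blocks, to run the game past $\omega$, or to weaken totality to ``pairs $\I$ actually sees''; one fresh element per turn suffices, and each such move stays well within the $|n|^c$ size bound on elements of $\O$ (it increases $|o|$ by $1$). Once you use countability of $\M$, your two-part recipe collapses to the paper's one-line strategy.
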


\begin{proof}
    No matter how the players proceed, the resulting interpretation of $\triangleleft$ is a strict, though not necessarily total, ordering of a subset of $[0,\dots,n)$.

    To ensure that $\triangleleft$ is total, $P_{\textsf{MIN}}$ maintains an external countable queue of all elements of $[0,\dots,n)$, possible because $\M$ is countable externally.

    Given a sequence $o=(a_1,\dots,a_m)\in\O$, the player removes all $a_i$ from the queue, picks the first remaining element $a$, and extends $o$ to
    \[
        o'=(a,a_1,\dots,a_m).
    \]
    Every element of $[0,\dots,n)$ is eventually inserted, and every inserted element eventually has a smaller element placed before it. Hence the final ordering is total on $[0,\dots,n)$ and has no least element.
\end{proof}

The theorem below is a direct adaptation of a classical result of Paris and Wilkie \cite[Theorem 21]{pariswilkie85} for the theory $I\exists_1(\alpha)$, later extended by Riis \cite{riis93} to $\Talpha$. Here we provide only a sketch of the proof, more details and presentations of the mentioned theorem can be found, e.g., in \cite[Section 12.7]{krajicek95} and \cite[Theorem 2.1]{atseriasmuller15}.

\begin{theorem}
\label{Pind winning strategy}
    There exists a strategy $\S_{\textsf{IND}}$ winning for $P_{\textsf{IND}}$.
\end{theorem}

\begin{proof}[Proof (sketch).]
We use the standard fact that $\Sigma^b_1(\triangleleft)$ induction is
equivalent to the \emph{least number principle} for $\Sigma^b_1(\triangleleft)$ formulas. Thus it is enough to ensure, for every $\Sigma^b_1(\triangleleft)$ formula $\varphi(x,\bar a)$ with parameters from $\mathbb I$ and every bound $t\in\mathbb I$,
the sentence
\[
    \exists x<t\,\varphi(x,\bar a)
    \rightarrow
    \exists x<t\,
    \bigl(\varphi(x,\bar a)\land
    \forall y<x\,\neg\varphi(y,\bar a)\bigr).
\]

Say that a sequence $o\in\mathbb O$ \emph{settles} a sentence $\theta$ positively if every legal continuation of the play starting from $o$ produces a final expansion satisfying $\theta$.

We use the following observation. Let $\theta$ be a
$\Sigma^b_1(\triangleleft)$ sentence with parameters from $\mathbb I$. If some legal continuation of the play from $o$ produces a final expansion satisfying $\theta$, then there is an extension $o'\preceq o$ which settles $\theta$ positively.

Now fix the current condition $o$, a formula $\varphi(x,\bar a)$, and a bound $t\in\mathbb I$. Define
\[
    X=\{x<t:\text{ some }o'\preceq o
    \text{ settles }\varphi(x,\bar a)\text{ positively}\}.
\]
The technical content of the Paris--Wilkie--Riis theorem is that sets of this kind are
definable in the ground model $\M$.

If $X$ is empty, then no legal continuation from $o$ can make
$\varphi(x,\bar a)$ true for any $x<t$. Hence $o$ already guarantees the negation of
the antecedent
\[
    \exists x<t\,\varphi(x,\bar a),
\]
and therefore guarantees the least-number-principle implication.

If $X$ is non-empty, let $x_0$ be the least element of $X$. Choose
$o'\preceq o$ which settles $\varphi(x_0,\bar a)$ positively. By the minimality of $x_0$, no $y<x_0$ belongs to $X$. By the observation above, this means that no legal continuation from $o$, and hence no legal continuation from $o'$, can make
$\varphi(y,\bar a)$ true for any $y<x_0$. Therefore $o'$ guarantees
\[
    \varphi(x_0,\bar a)\land
    \forall y<x_0\,\neg\varphi(y,\bar a).
\]
Thus $o'$ guarantees the conclusion of the least-number principle.
\end{proof}

To finish the proof of Theorem~\ref{main}, it is enough to prove the following theorem.

\begin{theorem}
\label{main strategy}
    There exists a strategy $\S_{\textsf{PHP}}$ winning for $P_{\textsf{PHP}}$.
\end{theorem}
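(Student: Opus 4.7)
The plan is to build $\S_\text{PHP}$ by a bookkeeping argument that devotes one of her turns to each PHP-instance, and to reduce the theorem to a single combinatorial existence lemma about $\Dmin$-relations. First, I fix inside $\M$ an enumeration $\langle (\varphi_i, p_i, h_i)\rangle_i$ of all triples where $\varphi_i$ is a $\Dmin$-formula with parameters from $\I$ and $p_i > h_i$ are in $\I$; since $|\I| \leq 2^{|n|^c}$ for some standard $c$, this enumeration is $\M$-finite. On her $i$-th turn (the step $k$ with $k \equiv 2 \bmod 3$ indexed as her $i$-th move), she focuses on $\text{PHP}^{p_i}_{h_i}(\varphi_i)$.

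For that instance, $P_\text{PHP}$ first checks whether the current board $\prec_{o_k}$ already forces one of the first three disjuncts of $\text{PHP}^{p_i}_{h_i}(\varphi_i)$ (the ``not an injective function'' clauses). If so, she sets $o_{k+1}=o_k$. Otherwise she commits to a ``killing'' extension $o_{k+1}\geq o_k$ guaranteeing the following: there is some $a^* < p_i$ such that every compatible completion of $\prec_{o_{k+1}}$ makes the polynomial-time algorithm $A_i$ of Corollary \ref{buss corollary} reject on input $(a^*, b)$ for every $b < h_i$. Since the $\prec$-queries whose values are fixed by $\prec_{o_{k+1}}$ are preserved by every extension $o' \geq o_{k+1}$, once $a^*$ is locked in no subsequent move by any player can revive $\varphi_i(a^*, \cdot)$, and so the non-totality disjunct of $\text{PHP}^{p_i}_{h_i}(\varphi_i)$ holds in $\I_{\prec}$.

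Everything thus reduces to a key combinatorial lemma: for any board $o$ of size at most $|n|^{c_0}$ on which $\varphi_i$ still looks like a partial injective function into $[0,h_i)$, there exist $o'\geq o$ of size at most $|n|^{c_1}$ (with $c_1$ standard) and an $a^* < p_i$ such that the decision trees $A_i(a^*, b)$, $b < h_i$, are \emph{all} driven to rejection by $\prec_{o'}$. My approach is to exploit that each call $A_i(a,b)$ poses only $\text{poly}(|n|)$ queries to $\prec$, so the total ``dependency set'' used by $A_i(a, \cdot)$ on $[0,h_i)$ is of controlled size. Combined with the enormous supply of fresh numbers in $[0,n)$ relative to the polynomially small board, a pigeonhole/counting argument should locate an $a^*$ whose dependency set is essentially disjoint from the commitments already in $o$, together with a disjoint pool of fresh witnesses on which $\prec$ can be extended in a bounded way that blocks every accepting branch of $A_i(a^*, b)$ uniformly in $b$.

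The \textbf{main obstacle} is precisely this combinatorial lemma. Simultaneously forcing all $h_i$ decision trees $A_i(a^*, b)$ to reject, while respecting the prior commitments of $P_\text{MIN}$ and $P_\text{IND}$ (which may already encode nontrivial $\prec$-comparisons), is delicate: accepting branches for different $b$ may share queries, $h_i$ itself can be huge, and the row must be killed in a \emph{single} move of size $\leq |n|^{c_1}$. I expect this to require a switching/restriction-style argument tailored to $\prec$-oracle decision trees, and this is presumably the content of the ``two combinatorial theorems'' promised for Section \ref{analysis}.
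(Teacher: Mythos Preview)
Your high-level outline---enumerate all instances $(\varphi_i,p_i,h_i)$ in a countable queue and on the $i$-th turn extend the board to force $\text{PHP}^{p_i}_{h_i}(\varphi_i)$---matches the paper exactly, and you correctly isolate the hard step as the single ``killing'' lemma: given $o$ on which $\varphi$ still behaves like a partial injection, find $o'\geq o$ in $\O$ and $a^*<p$ so that every accepting branch of every decision tree $T_{a^*,b}$ is incompatible with $o'$. Two minor slips: the enumeration is countable externally, not $\M$-finite ($\I$ is a cut and there are infinitely many formulas); and PHP has only three disjuncts, so ``first three'' should read ``first two.''

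The genuine gap is your proposed attack on the key lemma. Your idea of locating an $a^*$ whose ``dependency set'' is essentially disjoint from $o$ and then killing all $h$ trees with fresh witnesses does not get off the ground: the union of the query sets of $A_i(a^*,b)$ over $b<h$ has size up to $h\cdot |n|^c$, which may be of order $n$, so there is no ``controlled size'' to exploit, and a switching/restriction argument over $\prec$-oracle trees has no obvious formulation here. More importantly, your plan never uses the hypothesis that the first two PHP disjuncts fail, and that hypothesis is exactly what makes the lemma true. The paper proceeds by contradiction: if \emph{no} killing extension exists, then the array $A[a,b]=T^+_{a,b}$ of accepting branches satisfies all five axioms of a \emph{MIN $(o,p,h)$-array} (Definition~\ref{min array}): chains in the same column are pairwise incompatible (from injectivity), chains in the same row are pairwise incompatible (from functionality), and every row meets every extension of $o$ (from the assumed non-existence of a killing $o'$). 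One then normalizes each row to a uniform MIN $o$-tree of depth $d$ (Theorem preceding Corollary~\ref{lower bound}), giving the array size \emph{exactly} $p\cdot (|o|+d)!/|o|!$; on the other hand each column is an antichain of $d$-chains extending $o$, hence has size at most $(|o|+d)!/|o|!$ (Theorem~\ref{upper bound}), so the total size is at most $h\cdot(|o|+d)!/|o|!$. This double count forces $p\leq h$, contradicting $p>h$. The argument is a clean counting of incompatible extensions of a linear order, not a restriction/switching lemma.
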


\subsection{MIN-trees}

\begin{definition}
\label{min tree}
    Let $o\in\O$. A subset $T$ of $\O$ definable in $\M$ is called an \deff{$o$-MIN-tree}, or simply an \deff{$o$-tree}, if it satisfies the following recursive definition:
    \begin{itemize}
        \item either $T=\{o\}$;
        \item or, writing $o=(a_1,\dots,a_m)$, there is a single $a\in[0,\dots,n)\setminus\{a_1,\dots,a_m\}$ such that
        \[
            T=\bigcup_{o'\in A}T_{o'},
        \]
        where $A$ is the set of all sequences extending $o$ by adding $a$ to its domain, and each $T_{o'}$ is an $o'$-tree.
    \end{itemize}

    The \deff{depth} of $T$ is the largest difference between the length of a sequence in $T$ and the length of $o$. An $o$-tree is \deff{uniform} if all its members have the same length. The \deff{size} of $T$ is the number of elements it contains.
\end{definition}

\begin{remark}
    The above definition treats trees as definable subsets of $\O$ that can be suitably factorized. A more standard convention, such as the definition of PHP-trees in \cite[Section 15.1]{krajicek19}, keeps the entire tree structure. The present convention is sufficient here because only the maximal conditions represented by the tree are used.
\end{remark}

\begin{proposition}
\label{min tree properties}
    Let $T$ be an $o$-tree.
    \begin{enumerate}
        \item Any two distinct elements of $T$ are incompatible.
        \item Any member of $\O$ compatible with $o$ is compatible with some element of $T$.
        \item If the depth of $T$ is at most $d$, then
        \[
            |T|\leq \frac{(|o|+d)!}{|o|!},
        \]
        with equality if and only if $T$ is a uniform $o$-tree of depth exactly $d$.
    \end{enumerate}
\end{proposition}

By Theorem~\ref{buss}, every $\Delta^b_1(\triangleleft)$ relation is computable by a polynomial-time oracle algorithm with oracle $\triangleleft$. Fixing numerical parameters from $\I$, the possible oracle computations form a decision tree of depth polynomial in the lengths of those parameters. Since throughout the construction we maintain that $\triangleleft$ is a strict partial order, an oracle query of the form $u\triangleleft v$ can equivalently be implemented by inserting not-yet-seen elements among the currently ordered ones with their relative ordering given by the query answer. Thus the usual decision tree can be represented as a MIN-tree.

\begin{proposition}
\label{buss restatement}
    Let $\varphi(x,y)$ be a $\Dmin$ formula with parameters from $\I$. Let $p,h\in\I$ and $o\in\O$ be fixed.

    There exist definable families
    \[
        \{T^+_{a,b}\}_{a<p,b<h}
        \quad\text{and}\quad
        \{T^-_{a,b}\}_{a<p,b<h}
    \]
    such that, for each $a<p$ and $b<h$, the sets $T^+_{a,b}$ and $T^-_{a,b}$ are disjoint and their union is an $o$-tree of depth at most $|n|^c$, for a fixed standard $c$ depending only on $\varphi$.

    Furthermore, $\Smin$ proves that, for all $a<p$ and $b<h$,
    \begin{align*}
        (\neg\MIN_n\land\text{``$\triangleleft$ extends $o$''})
        \to
        \bigl(\varphi(a,b)\leftrightarrow
        \text{``$\triangleleft$ extends some $o'\in T^+_{a,b}$''}\bigr),
    \end{align*}
    and
    \begin{align*}
        (\neg\MIN_n\land\text{``$\triangleleft$ extends $o$''})
        \to
        \bigl(\neg\varphi(a,b)\leftrightarrow
        \text{``$\triangleleft$ extends some $o'\in T^-_{a,b}$''}\bigr).
    \end{align*}
\end{proposition}

\begin{theorem}
\label{single step}
    Let $\varphi(x,y)$ be a $\Dmin$ formula with parameters from $\I$. Let $p,h\in\I$ and $o\in\O$ be fixed.

    Let $\{T^+_{a,b}\}_{a<p,b<h}$ be as in Proposition~\ref{buss restatement}. Suppose that at least one of the following properties holds:
    \begin{itemize}
        \item there exist $a\neq b<p$ and $c<h$ such that, for some $o'\in T^+_{a,c}$ and $o''\in T^+_{b,c}$, we have $o'\|o''$;
        \item there exist $a<p$ and $b\neq c<h$ such that, for some $o'\in T^+_{a,b}$ and $o''\in T^+_{a,c}$, we have $o'\|o''$;
        \item there exist $o'\preceq o$ and $a<p$ such that, for all $b<h$ and all $o''\in T^+_{a,b}$, we have $o'\perp o''$.
    \end{itemize}
    Then there exists $o^*\in\O$ extending $o$ such that, assuming $P_{\textsf{MIN}}$ uses $\S_{\textsf{MIN}}$ as in Proposition~\ref{Pmin winning strategy} and $P_{\textsf{IND}}$ uses $\S_{\textsf{IND}}$ as in Theorem~\ref{Pind winning strategy}, the formula $\varphi(x,y)$ does not define the graph of an injective function from $[0,\dots,p)$ into $[0,\dots,h)$ in any expansion $(\I,\triangleleft)$ constructed in a play where $P_{\textsf{PHP}}$ plays $o^*$.
\end{theorem}

\begin{proof}
    The assumptions on $P_{\textsf{MIN}}$ and $P_{\textsf{IND}}$ guarantee that the resulting expansion of $\I$ satisfies $\Tmin$ and that $\triangleleft$ is a strict linear ordering of $[0,\dots,n)$.

    If one of the first two bullets holds, let $o^*$ be any common extension of the compatible conditions $o'$ and $o''$. Then $o^*$ ensures violation of either injectivity or functionality of the corresponding pigeonhole-principle instance.

    Suppose the third bullet holds, and let $o^*=o'$. We claim that this settles
    \[
        \exists x<p\,\forall y<h\,\neg\varphi(x,y).
    \]
    Let $a<p$ be such that, for all $b<h$ and all $o''\in T^+_{a,b}$, we have $o^*\perp o''$.

    For contradiction, suppose there is $b<h$ such that the resulting structure satisfies
    \[
        \I_\triangleleft\models\varphi(a,b).
    \]
    Since $\I_\triangleleft$ models $\Smin$, Proposition~\ref{buss restatement} gives some $o''\in T^+_{a,b}$ such that $\triangleleft$ extends $o''$. Since $\triangleleft$ also extends $o^*$, the conditions $o^*$ and $o''$ are compatible, contradicting the choice of $o^*$.
\end{proof}

Assuming we can establish that, whenever $p>h$, the family $\{T^+_{a,b}\}_{a<p,b<h}$ satisfies one of the hypotheses of Theorem~\ref{single step}, we can prove Theorem~\ref{main}. Namely, the winning strategy $\S_{\textsf{PHP}}$ for $P_{\textsf{PHP}}$ enumerates all $\Dmin$ formulas with parameters from $\I$ and all pairs $p>h$ from $\I$. Being presented with $o\in\O$, they choose the next formula and the next pair $p>h$, apply Theorem~\ref{single step}, and extend $o$ to the corresponding $o^*$.

\subsection{A note on forcing}

It is possible to view the construction presented in this paper through the lens of forcing. The forcing frame is $\O$, and a generic object is an interpretation of $\triangleleft$ obtained by meeting the relevant dense sets. One then shows that any sufficiently generic interpretation of $\triangleleft$ satisfies the required conditions.

A more detailed presentation can be found in \cite{narusevych24}, with an application to a related problem. More information on this kind of forcing in bounded arithmetic can be found in \cite{krajicek95,atseriasmuller15,muller20}.

\section{Combinatorial analysis}
\label{analysis}

\subsection{PHP-arrays}

In this section, we show that any definable family as in Proposition~\ref{buss restatement} satisfies the hypothesis of Theorem~\ref{single step} whenever $p>h$.

\begin{definition}
\label{min array}
    Let $o\in\O$ and $p,h\in\I$. Let
    \[
        A=\{A_{a,b}\}_{a<p,b<h}
    \]
    be an indexed family, definable in $\M$, such that each $A_{a,b}$ is a subset of $\O$.

    We say that $A$ is an \deff{$(o,p,h)$-PHP-array}, or simply an \deff{$(o,p,h)$-array}, if the following conditions are satisfied:
    \begin{itemize}
        \item for all $a<p$ and $b<h$, every $q\in A_{a,b}$ extends $o$;
        \item for all $a<p$ and $b<h$, any two distinct $q,r\in A_{a,b}$ are incompatible;
        \item for all $a\neq a'<p$ and $b<h$, every $q\in A_{a,b}$ is incompatible with every $r\in A_{a',b}$;
        \item for all $a<p$ and $b\neq b'<h$, every $q\in A_{a,b}$ is incompatible with every $r\in A_{a,b'}$;
        \item for every $q\preceq o$ and every $a<p$, there are $b<h$ and $r\in A_{a,b}$ such that $q\|r$.
    \end{itemize}
\end{definition}

\begin{remark}
    Since $A$ is definable in $\M$, there is a fixed standard constant $c$ such that all sequences from all $A_{a,b}$ have length at most $|n|^c$. This follows by overspill; see \cite[Section 6.1]{kaye91}.
\end{remark}

\begin{remark}
    Although $A$ as an indexed family is assumed to be definable, the statement ``$A$ is an $(o,p,h)$-array'' is external. The reason is the last bullet of Definition~\ref{min array}, which quantifies over all elements of $\O$.
\end{remark}

\begin{lemma}
\label{min array exist}
    Let $\varphi(x,y)$ be a $\Dmin$ formula with parameters from $\I$. Let $p,h\in\I$ and $o\in\O$ be fixed.

    Let $A$ denote the indexed family $\{T^+_{a,b}\}_{a<p,b<h}$ from Proposition~\ref{buss restatement}. Assume $A$ does not satisfy any of the properties listed in Theorem~\ref{single step}.

    Then $A$ is an $(o,p,h)$-array.
\end{lemma}

\begin{proof}
    The first two properties follow from the fact that each $T^+_{a,b}$ is a subset of an $o$-tree. The third and fourth properties are precisely the negations of the first two alternatives in Theorem~\ref{single step}. The fifth property is the negation of the third alternative in Theorem~\ref{single step}.
\end{proof}

\begin{definition}
\label{min array size}
    Let $A$ be an $(o,p,h)$-array. The \deff{size} of $A$ is
    \[
        |A|:=\sum_{a<p,b<h}|A_{a,b}|,
    \]
    where $|\cdot|$ denotes cardinality in $\M$.
\end{definition}

\begin{proposition}
\label{array row column size}
    For an $(o,p,h)$-array $A$,
    \[
        |A|=\sum_{a<p}\left|\bigcup_{b<h}A_{a,b}\right|
        =\sum_{b<h}\left|\bigcup_{a<p}A_{a,b}\right|.
    \]
\end{proposition}

\begin{proof}
    The equalities follow directly from the fact that any two elements from different cells of the same row or of the same column are distinct; indeed, they are incompatible.
\end{proof}

We write
\[
    A_a:=\bigcup_{b<h}A_{a,b}
    \quad\text{and}\quad
    A^b:=\bigcup_{a<p}A_{a,b}
\]
for the rows and columns of $A$.

\begin{definition}
    Given two $(o,p,h)$-arrays $A$ and $A'$, we say that $A'$ \deff{extends} $A$ if, for each $a<p$, $b<h$, and $q\in A'_{a,b}$, there is $r\in A_{a,b}$ such that $q\preceq r$.
\end{definition}

\begin{lemma}
\label{compatible envelope}
    Let $q,s\in\O$ be compatible conditions extending $o$. There is a $q$-tree $E(q,s)$, called \deff{envelope of} $s$ \deff{over} $q$, of depth at most $|s|-|o|$ such that every member $r\in E(q,s)$ satisfies
    \[
        \el(r)=\el(q)\cup\el(s).
    \]
    Moreover, if $s'\preceq o$ is incompatible with $s$ but compatible with $q$, then every $r\in E(q,s)$ compatible with $s'$ satisfies
    \[
        |\el(r)\cap\el(s')|\geq |\el(q)\cap\el(s')|+1.
    \]
\end{lemma}

\begin{proof}
    Construct $E(q,s)$ by successively choosing the elements of $\el(s)\setminus\el(q)$, in any fixed definable order, and at each step branching over all possible positions of the chosen element in the current sequence starting with $q$. This gives a $q$-tree. Its leaves are precisely the possible extensions of $q$ to the domain $\el(q)\cup\el(s)$.

    It remains to prove the last part of the claim. Suppose $s'\preceq o$, $s'\perp s$, and $s'\|q$. If
    \[
        (\el(s)\setminus\el(q))\cap\el(s')=\emptyset,
    \]
    then all elements common to $s$ and $s'$ already lie in $\el(q)$. Since both $s$ and $s'$ are compatible with $q$, the (partial) orderings induced by $s$ and by $s'$ agree on $\el(q) \supset \el(s) \cap \el(s')$. This contradicts $s\perp s'$.

    Therefore there is some element
    \[
        x\in (\el(s)\setminus\el(q))\cap\el(s').
    \]
    Every $r\in E(q,s)$ contains $\el(q)\cup\el(s)$, and hence contains this additional element $x$. Thus, whenever $r$ is compatible with $s'$,
    \[
        |\el(r)\cap\el(s')|\geq |\el(q)\cap\el(s')|+1.
    \]
\end{proof}

\begin{theorem}
\label{row uniformization}
    Let $A$ be an $(o,p,h)$-array. There exists an $(o,p,h)$-array $A'$ extending $A$ such that each row $A'_a$ is a uniform $o$-tree of depth $d=|n|^c$, for some fixed standard constant $c$.
\end{theorem}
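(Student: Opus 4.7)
The plan is to uniformize each row of $A$ by grafting a uniform MIN subtree of a common target depth onto every chain already in $A$. First, invoke the overspill remark preceding this theorem to fix a standard constant $c$ so that every chain appearing anywhere in $A$ has length at most $|n|^c$, and set $d := |n|^c$; this is the target depth. Second, for each $o'' \in A[a,b]$, fix inside $\M$ a uniform MIN $o''$-tree $T_{o''}$ of depth $d$; such a tree is built by iteratively picking any element outside the current label and branching on the $(k+1)$ insertion positions at a node of label size $k$, iterating $d - |o''|$ levels (if $|o''| = d$ then $T_{o''}$ is just the singleton $\{o''\}$). Letting $L(o'')$ denote the set of leaves of $T_{o''}$, define
\begin{align*}
    A'[a,b] \: := \: \bigcup_{o'' \in A[a,b]} L(o'').
\end{align*}
By construction every chain in $A'[a,b]$ has length exactly $d$ and extends some chain in $A[a,b]$, so $A'$ extends $A$ in the sense defined after Definition \ref{min array size}.

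The verification that $A'$ is itself a MIN $(o,p,h)$-array reduces to a two-case bookkeeping. For each pairwise-incompatibility clause of Definition \ref{min array} (within a cell, within a row across cells, or within a column across cells), the two chains $o^*, o^{**}$ in question either lie in the same $L(o'')$, in which case Proposition \ref{min tree properties}(1) applied to $T_{o''}$ yields $o^* \perp o^{**}$, or else they extend two distinct base chains $o'', o''' \in A$, and those base chains are already incompatible by the corresponding clause for $A$; since incompatibility is preserved under extension, $o^* \perp o^{**}$ again. For totality, given any $o^* \geq o$ and any row index $a$, use totality for $A$ to pick $b$ and $o'' \in A[a,b]$ with $o^* \| o''$, and then apply Proposition \ref{min tree properties}(2) to $T_{o''}$ with input $o^*$ to extract a leaf in $L(o'') \subseteq A'[a,b]$ compatible with $o^*$.

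Once $A'$ is shown to be a MIN $(o,p,h)$-array, each row $A'[a,:]$ is a set of length-$d$ chains that are pairwise incompatible and form a maximal antichain above $o$, so under the abuse of notation of Definition \ref{min tree} each $A'[a,:]$ is a uniform MIN $o$-tree of depth $d$. I expect the only genuinely delicate point in the argument is ensuring that a single standard exponent $c$ suffices for all cells of $A$ simultaneously, which is exactly what the overspill remark after Definition \ref{min array} delivers; everything else is a mechanical check that does not leave $\M$, since the size of each $T_{o''}$ is bounded well below $2^{|n|^{c+2}}$ and so every object constructed stays within the comprehension resources of $\M$.
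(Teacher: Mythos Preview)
Your construction is genuinely different from the paper's and the verification that $A'$ is again a MIN $(o,p,h)$-array is correct. The gap is in the last paragraph: you assert that a set of length-$d$ chains which is pairwise incompatible and forms a maximal antichain above $o$ is \emph{therefore} a uniform MIN $o$-tree. The ``abuse of notation'' in Definition~\ref{min tree} only lets you pass from a tree to its leaf set, not conversely, and Proposition~\ref{min tree properties} lists \emph{necessary} properties of leaf sets, not a characterization. Being a MIN $o$-tree is a recursive structural condition: at the root one must be able to name a single element $a\notin el(o)$ lying in \emph{every} chain of the set, then recurse on each insertion position of $a$. Nothing in your grafting construction guarantees such a common element exists, because the base antichain $A[a,:]$ you graft onto is an arbitrary maximal antichain, not itself a MIN $o$-tree, and the elements you query in the various $T_{o''}$ are chosen independently. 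So the theorem as stated is not proved.

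The paper avoids this by reversing the order of construction: it builds $T_a$ top-down as a MIN $o$-tree from the start (so the tree structure is automatic), and the work goes into an invariant ensuring each leaf eventually \emph{extends} some chain of $A[a,:]$. Concretely, one picks any $o_0\in A[a,:]$, branches on all orderings of $el(o_0)$ extending $o$, and observes that any leaf compatible with some $o''\in A[a,:]$ must now share at least $|o|+1$ elements with it (since $o''\perp o_0$ forces overlap). Iterating---at each leaf $o'$ pick a compatible $o_{o'}\in A[a,:]$ and branch on orderings of $el(o_{o'})$---grows the guaranteed overlap by one each round, so after $d'$ rounds (where $d'$ bounds chain lengths in $A$) every leaf extends some chain of $A[a,:]$. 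Only then are leaves distributed into cells. This yields depth $(d')^2$ rather than $d'$, but the MIN $o$-tree structure is never in doubt. Your approach would give a cleaner bound and does produce rows of the exact size $\frac{d!}{|o|!}$ needed for Corollary~\ref{lower bound}, but to actually prove the theorem you would have to supply the missing lemma that every uniform maximal antichain above $o$ arises as the leaf set of some MIN $o$-tree.
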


\begin{proof}
    Let $d'=|n|^{c'}$ be such that every sequence appearing in $A$ has length at most $|o|+d'$. We fix $a<p$ and construct a sequence of $o$-trees
    \[
        T^a_0,T^a_1,\dots,T^a_{d'}
    \]
    satisfying the invariant
    \begin{equation}
    \label{intersection invariant}
        \forall q\in T^a_i\,\forall s\in A_a\,
        \bigl(q\|s\to |\el(q)\cap\el(s)|\geq \min(|o|+i,|s|)\bigr).
    \end{equation}
    For $i=0$, take $T^a_0=\{o\}$.

    Suppose $T^a_i$ has been constructed. For each leaf $q\in T^a_i$, proceed as follows.

    If $q$ extends some $s\in A_a$, then we leave $q$ unchanged at this stage. This preserves the invariant: if $s'\in A_a$ is compatible with $q$, then $s'=s$, because distinct elements of the same row are incompatible. Hence $q$ already contains all elements of $s'$.

    Otherwise, by the last property of an $(o,p,h)$-array, there exists some $s_q\in A_a$ compatible with $q$. Replace $q$ by the compatible envelope $E(q,s_q)$ from Lemma~\ref{compatible envelope}.

    Running this construction for every leaf $q\in T^a_i$ gives an $o$-tree $T^a_{i+1}$. We verify the invariant. Let $r\in T^a_{i+1}$ extend a leaf $q\in T^a_i$, and let $s'\in A_a$ be compatible with $r$. Then $s'$ is compatible with $q$.

    If $s'=s_q$, then $r$ contains $\el(s_q)$, hence
    \[
        |\el(r)\cap\el(s')|=|s'|.
    \]
    If $s'\neq s_q$, then $s'$ and $s_q$ are incompatible, since they are distinct elements of the same row $A_a$. By Lemma~\ref{compatible envelope},
    \[
        |\el(r)\cap\el(s')|\geq |\el(q)\cap\el(s')|+1.
    \]
    By the induction hypothesis,
    \[
        |\el(q)\cap\el(s')|\geq \min(|o|+i,|s'|).
    \]
    If this minimum were $|s'|$, then $q$ would contain all elements of $s'$, and since $q\|s'$, the condition $q$ would extend $s'$, contradicting the case we are considering. Hence the minimum is $|o|+i$, and so
    \[
        |\el(r)\cap\el(s')|\geq |o|+i+1
        =\min(|o|+i+1,|s'|).
    \]
    Thus the invariant holds for $T^a_{i+1}$.

    After $d'$ steps, for every $q\in T^a_{d'}$ and every $s\in A_a$ compatible with $q$, we have
    \[
        |\el(q)\cap\el(s)|=|s|.
    \]
    Hence $\el(s)\subseteq\el(q)$. Since $q$ and $s$ are compatible strict orderings, it follows that $q$ extends $s$.

    Extend $T^a_{d'}$ arbitrarily to a uniform $o$-tree $T^a$ of depth $d=|n|^c$, where $c$ is a standard constant large enough for all rows. The property that every leaf of the tree extends a unique element of $A_a$ is preserved. Uniqueness follows from the fact that distinct elements of $A_a$ are incompatible.

    For each $b<h$, define
    \[
        A'_{a,b}:=\{q\in T^a: \text{ for some }s\in A_{a,b},\ q\preceq s\}.
    \]
    Repeating the construction for every $a<p$ gives an indexed family $A'$.

    We verify that $A'$ is an $(o,p,h)$-array. Every element of every $A'_{a,b}$ extends $o$, because it belongs to the $o$-tree $T^a$. Any two distinct elements in the same cell are incompatible because they are distinct leaves of the same $o$-tree. The row and column incompatibility conditions follow because every element of $A'_{a,b}$ extends an element of $A_{a,b}$, and incompatibility is preserved under extension. Finally, if $q\preceq o$ and $a<p$, then, since $T^a$ is an $o$-tree, Proposition~\ref{min tree properties} gives a leaf $r\in T^a$ compatible with $q$. This leaf belongs to $A'_{a,b}$ for some $b<h$, and hence the last property of Definition~\ref{min array} holds.
\end{proof}

\begin{corollary}
\label{lower bound}
    An $(o,p,h)$-array $A$ is extendable to an $(o,p,h)$-array $A'$ of size exactly
    \[
        p\frac{(|o|+d)!}{|o|!},
    \]
    where $d=|n|^c$ for some standard $c$. Moreover, every element of every cell $A'_{a,b}$ has length exactly $|o|+d$.
\end{corollary}

\begin{proof}
    By Theorem~\ref{row uniformization}, extend $A$ to an array $A'$ such that each row $A'_a$ is a uniform $o$-tree of depth $d$. By Proposition~\ref{min tree properties}, each row has size exactly
    \[
        \frac{(|o|+d)!}{|o|!}.
    \]
    Summing over all $p$ rows gives the claimed size.
\end{proof}

Compared to \cite[Theorem 4.11]{narusevych24}, this gives an exact size rather than only a lower bound.

\begin{theorem}
\label{upper bound}
    Let $A$ be an $(o,p,h)$-array whose elements all have length at most $|o|+d$. Then
    \[
        |A|\leq h\frac{(|o|+d)!}{|o|!}.
    \]
\end{theorem}

\begin{proof}
    The proof is analogous to \cite[Theorem 4.12]{narusevych24}, which itself resembles the argument of \cite{katona72}.

    First extend every sequence appearing in $A$ arbitrarily to length exactly $|o|+d$, keeping it in the same cell. This does not change the cardinality of any cell and preserves incompatibility. Denote the resulting indexed family by $A'$.

    For each $b<h$, the column $(A')^b$ is a set of pairwise incompatible elements of $\O$, all of length exactly $|o|+d$. We claim that
    \[
        |(A')^b|\leq \frac{(|o|+d)!}{|o|!}.
    \]
    Choose $m$ in $\M$ large enough that every element appearing in every sequence from $(A')^b$ lies in $[0,\dots,m)$. There are exactly $m!/|o|!$ strict linear orderings of $[0,\dots,m)$ extending $o$. Each $q\in(A')^b$ extends to exactly $m!/(|o|+d)!$ such strict linear orderings. Since distinct elements of $(A')^b$ are incompatible, these sets of extensions are disjoint. Hence
    \[
        |(A')^b|\cdot \frac{m!}{(|o|+d)!}
        \leq \frac{m!}{|o|!},
    \]
    and the desired bound follows. Summing over all $h$ columns gives
    \[
        |A|=|A'|\leq h\frac{(|o|+d)!}{|o|!}.
    \]
\end{proof}

\begin{corollary}
\label{array inequality}
    If an $(o,p,h)$-array exists, then $p\leq h$.
\end{corollary}

\begin{proof}
    Extend $A$ to $A'$ as in Corollary~\ref{lower bound}. Then
    \[
        |A'|=p\frac{(|o|+d)!}{|o|!}.
    \]
    By Theorem~\ref{upper bound},
    \[
        |A'|\leq h\frac{(|o|+d)!}{|o|!}.
    \]
    Therefore $p\leq h$.
\end{proof}

\begin{proof}[Proof of Theorem~\ref{main strategy}]
    Let $\varphi(x,y)$ be a $\Dmin$ formula with parameters from $\I$, and let $p>h$ be elements of $\I$. Given $o\in\O$, consider the family $A=\{T^+_{a,b}\}_{a<p,b<h}$ from Proposition~\ref{buss restatement}. If $A$ satisfies one of the alternatives in Theorem~\ref{single step}, then $P_{\textsf{PHP}}$ can extend $o$ so that $\varphi$ fails to define an injective function from $p$ to $h$.

    If no such alternative holds, then by Lemma~\ref{min array exist}, $A$ is an $(o,p,h)$-array. Corollary~\ref{array inequality} then gives $p\leq h$, contradicting the choice of $p>h$. Therefore one of the alternatives in Theorem~\ref{single step} must always hold.

    Enumerating all $\Dmin$ formulas with parameters from $\I$ and all pairs $p>h$ from $\I$ gives a winning strategy for $P_{\textsf{PHP}}$.
\end{proof}

Combining Proposition~\ref{Pmin winning strategy}, Theorem~\ref{Pind winning strategy}, and Theorem~\ref{main strategy}, we obtain Theorem~\ref{main}.

\subsection{Densely ordered model}
\label{dense ordering}

The ordering constructed above can be made even more pathological. Namely, given a countable collection of sufficiently large subsets $X$ of $\I$, we can arrange that $\triangleleft$ orders all of them densely; that is, $(X,\triangleleft\vert_X)$ is a dense linear ordering. Here ``sufficiently large'' means that there is a definable injection in $\M$ mapping $[0,\dots,|n|^d)$ into $X$, for some nonstandard $d$.

We denote by $\textsf{DLO}(\triangleleft\vert_X)$ the following sentence:
\begin{gather*}
    \forall x\in X\,\neg(x\triangleleft x)
    \\
    \land
    \\
    \forall x,y\in X\,(x\neq y\to (x\triangleleft y\lor y\triangleleft x))
    \\
    \land
    \\
    \forall x,y,z\in X\,((x\triangleleft y\land y\triangleleft z)\to x\triangleleft z)
    \\
    \land
    \\
    \forall x\in X\,\exists y\in X\,(y\neq x\land y\triangleleft x)
    \\
    \land
    \\
    \forall x\in X\,\exists y\in X\,(y\neq x\land x\triangleleft y)
    \\
    \land
    \\
    \forall x,y\in X\,(x\triangleleft y\to \exists z\in X\,(x\triangleleft z\land z\triangleleft y)).
\end{gather*}

\begin{theorem}
\label{dense theorem}
    Assume $\{X_i\}_{i\in\mathbb N}$ is a countable collection of subsets of $\I$ such that, for each $X_i$, there exists a nonstandard $d$ and a definable injection mapping $[0,\dots,|n|^d)$ into $X_i$.

    Then there exists an expansion of $\I$ to a model of
    \[
        \Tmin+\textsf{PHP}(\Dmin)
    \]
    which moreover satisfies $\textsf{DLO}(\triangleleft\vert_{X_i})$ for all $i\in\mathbb N$.
\end{theorem}

\begin{proof}
    The model is constructed similarly to the proof of Theorem~\ref{main}. The set $\O$ is replaced by $\O'$, consisting of arbitrary sequences of elements of $\I$ of length at most $|n|^c$, for some standard constant $c$.

    The players $P_{\textsf{IND}}$ and $P_{\textsf{PHP}}$ are defined analogously, and the existence of their winning strategies is proved in the same way. For $P_{\textsf{PHP}}$, note that the lower and upper bounds on the sizes of arrays were proved without using the fact that the order was being constructed only on $[0,\dots,n)$.

    The strategy for $P_{\textsf{MIN}}$ is replaced by a countable sequence of players $P^{X_i}_{\textsf{DLO}}$, one for each $X_i$. The game is scheduled so that $P_{\textsf{IND}}$, $P_{\textsf{PHP}}$, and each $P^{X_i}_{\textsf{DLO}}$ play infinitely often.

    The player $P^X_{\textsf{DLO}}$ maintains an external queue of elements of $X$. Given a condition $o\in\O'$, let
    \[
        o_X=(a_{i_1},\dots,a_{i_m})
    \]
    be the subsequence of $o$ consisting of elements from $X$. The player removes these elements from its queue and chooses fresh elements
    \[
        b_0,b_1,\dots,b_m\in X.
    \]
    It then extends the induced ordering on $X$ so that
    \[
        b_0\triangleleft a_{i_1}\triangleleft b_1\triangleleft a_{i_2}\triangleleft\cdots\triangleleft a_{i_m}\triangleleft b_m.
    \]
    Finally, it takes a common extension of this ordering with the previous condition $o$.

    Since every $P^X_{\textsf{DLO}}$ acts infinitely often, this ensures that $\triangleleft\vert_X$ has no endpoints and that between any two previously ordered elements of $X$ a fresh element of $X$ is eventually inserted. Hence $\textsf{DLO}(\triangleleft\vert_X)$ holds in the final model.
\end{proof}

\section{Variants of the construction}
\label{applications}

At this point, a general recipe for building a model of $\textsf{T}^1_2(\alpha)+\textsf{PHP}(\Delta^b_1(\alpha))$, where $\alpha$ witnesses the failure of some combinatorial principle $C$, can be summarized as follows.
\begin{enumerate}
    \item Define a poset $\mathbb P_C$ containing conditions for building an interpretation of $\alpha$ such that the player $P_C$ aiming to falsify $C$ has a winning strategy.

    \item Verify that $P_{\textsf{IND}}$ has a winning strategy. This amounts to proving a theorem similar to the result of Paris and Wilkie \cite{pariswilkie85}; a general treatment of this kind of problem in a forcing framework can be found in \cite{atseriasmuller15}.

    \item Verify that $P_{\textsf{PHP}}$ has a winning strategy. This is achieved by the following steps.
    \begin{enumerate}
        \item Define a suitable analogue of an $(o,p,h)$-array and show that the existence of such an array is mutually exclusive with the immediate success of the PHP player.
        \item Define a suitable notion of a tree, as in Definition~\ref{min tree}, and extend rows of the array to have tree structure. This gives a lower bound on the size of the array.
        \item Prove an upper bound on the size of a family of pairwise incompatible conditions in $\mathbb P_C$.
        \item Compare the lower and upper bounds to derive an inequality such as $p\leq h$.
    \end{enumerate}
\end{enumerate}

A particular instance of this method, albeit in a slightly different language, was used in \cite{narusevych24} to separate $\textsf{WPHP}(\Delta^b_1(R))$ and $\forall m\leq n^\epsilon\,\textsf{PHP}^{m+1}_m(\Delta^b_1(R))$ from $\textsf{ontoPHP}^{n+1}_n(R)$ over $\textsf{T}^1_2(R)$.

In the following subsections, we sketch additional separations obtained by the same method.

\subsection{The $\TOUR$ principle}

\begin{definition}[{\cite[Section 12.1]{krajicek95}}]
    A \deff{tournament} is a directed graph $(V,E)$ with exactly one directed edge between any two distinct vertices.

    A set $X\subseteq V$ is \deff{dominating} if, for every vertex $w\in V\setminus X$, there is $v\in X$ such that $(v,w)\in E$.
\end{definition}

A well-known fact is that a tournament on $n$ vertices contains a dominating set of size at most $|n|+1$; see, for example, \cite[Section 2.5]{megiddovishkin88}.

Given a binary relation $E$ and a number $n$, the following sentence $\TOUR(E)_n$ expresses this fact:
\begin{gather*}
    \exists x\neq y<n\,(\neg E(x,y)\land\neg E(y,x))
    \\
    \lor
    \\
    \exists x\neq y<n\,(E(x,y)\land E(y,x))
    \\
    \lor
    \\
    \exists X\subseteq[0,\dots,n)\,\bigl(|X|\leq |n|+1\land
    \forall x<n\,(x\notin X\to \exists y\in X\,E(y,x))\bigr).
\end{gather*}
Sets $X$ quantified in the above expression are taken to be definable in the ground model $\M$, and hence codable by numbers of size approximately $n^{|n|+1}$, which belong to $\I$.

\begin{remark}
    The tournament principle was shown by Je\v r\'abek \cite{jerabek08} to be provable in the bounded arithmetic theory $\textsf{APC}_2(E)$.
\end{remark}

Following the recipe above, define $\mathbb T$ as the set of tournaments of size at most $|n|^c$ on subsets of $[0,\dots,n)$, where $c$ is an arbitrary standard constant. The order is inverse inclusion.

There is a simple winning strategy for the player $P_{\textsf{DOM}\vert X}$, whose goal is to ensure that a given small set $X$ is not dominating in the resulting tournament. Given $t\in\mathbb T$, choose $x\in[0,\dots,n)\setminus X$ not yet appearing in $t$, and extend $t$ by adding $x$ with all edges between $x$ and elements of $X$ directed from $x$ towards $X$. Then no vertex from $X$ dominates $x$.

The exact size bound $|X|\leq |n|+1$ is not essential here. It suffices that $X$ has size at most $|n|^c$ for a standard $c$, similarly to the situation in Theorem~\ref{dense theorem}.

To ensure that $E$ is a tournament on the full interval $[0,\dots,n)$, we add another player $P_{\textsf{TOUR}}$, who maintains a countable queue of vertices from $[0,\dots,n)$ not yet inserted into the tournament. Given $t$, the player chooses the first vertex in the queue and adds it to $t$, orienting its edges to the existing vertices arbitrarily.

We omit the treatment of $P_{\textsf{IND}}$, since the proof is analogous to \cite[Theorem 21]{pariswilkie85}.

The definition of a $t$-$\TOUR$-tree is the expected one. Choosing a new vertex $x$, one branches according to all possible ways of extending the current tournament $t$ by including $x$. A uniform $t$-tree of depth $d$ has size
\[
    2^{|t|}\cdot 2^{|t|+1}\cdots 2^{|t|+d-1}.
\]
As in Theorem~\ref{upper bound}, one proves that a pairwise incompatible set of tournament conditions extending $t$ and having size $|t|+d$ has cardinality at most the same product. Hence the lower-bound and upper-bound comparison again yields the dimensional restriction $p\leq h$, which gives the desired PHP strategy.

\subsection{The $\dWPHP$ principle}

The following principle was first considered by A. Wilkie; see \cite[Section 11.2]{krajicek19}. There the principle is referred to as $\textsf{WPHP}$; the current terminology was developed later.

\begin{definition}
    For a relation $R\subseteq[0,\dots,n)\times[0,\dots,2n)$, the \deff{dual weak pigeonhole principle} says that $R$ cannot be the graph of a surjective function from $[0,\dots,n)$ onto $[0,\dots,2n)$.

    This is expressed by the sentence $\dWPHP(R)$:
    \begin{gather*}
        \exists x<n\,\exists y\neq z<2n\,(R(x,y)\land R(x,z))
        \\
        \lor
        \\
        \exists x<n\,\forall y<2n\,\neg R(x,y)
        \\
        \lor
        \\
        \exists y<2n\,\forall x<n\,\neg R(x,y).
    \end{gather*}
\end{definition}

To build a model of $\textsf{T}^1_2(R)$ that violates $\dWPHP(R)$, we play the game with $\mathbb D$ being the set of graphs of partial functions on $[0,\dots,n)\times[0,\dots,2n)$ of size at most $|n|^c$, for some standard constant $c$. The order is inverse inclusion.

The player $P_{\dWPHP}$ maintains a countable queue of elements of $[0,\dots,2n)$ not yet in the range of the partial function. At each move, it extends the current condition so as to put the next element of this queue into the range, while preserving functionality. Thus the final relation is the graph of a total surjection from $[0,\dots,n)$ onto $[0,\dots,2n)$.

A winning strategy for $P_{\textsf{IND}}$ again follows from \cite[Theorem 21]{pariswilkie85}.

Finally, a winning strategy for $P_{\textsf{PHP}}$ follows from a combinatorial analysis similar to that in \cite{narusevych24}. There, the bijective pigeonhole principle was considered. In that case, trees allowed two kinds of queries: one could either choose a pigeon and ask for its corresponding hole, or choose a hole and ask for its corresponding pigeon. In the current context, only the first kind of query is allowed: given a pigeon, ask for its corresponding hole.

The size of a uniform tree of depth $d$ is exactly $(2n)^d$. On the other hand, the upper bound on the size of a depth-$d$ uniform set of pairwise incompatible elements of $\mathbb D$ is also $(2n)^d$. Hence the same array-size comparison gives the desired strategy for $P_{\textsf{PHP}}$.

\subsection{Open problem}

Consider the following formal expression, where $\varphi(x,y)$ is a $\Dmin$ formula and $h<p$:
\begin{gather*}
    \exists x\neq y<p\,\exists z<h\,(\varphi(x,z)\land\varphi(y,z))
    \\
    \lor
    \\
    \exists x<p\,\forall y<h\,\neg\varphi(x,y).
\end{gather*}
This is a variant of the pigeonhole principle that allows multifunctions and is possibly stronger than the $\textsf{PHP}(\varphi(x,y))$ defined in Section~\ref{subsection min}.

Let $\textsf{PHP}'(\Dmin)$ denote the set of instances of the above formal expression for all $\varphi(x,y)\in\Dmin$ and all $h<p$ from $\I$.

We do not know whether it is possible to build a model of
\[
    \Tmin+\textsf{PHP}'(\Dmin)+\neg\MIN_n
\]
using the combinatorial methods of the present paper. The apparent difficulty is that one needs lower bounds on the size of a modified version of an $(o,p,h)$-array in which elements of the same row are no longer necessarily incompatible, or even distinct.

\section*{Acknowledgements}

I thank my supervisor, Jan Kraj\'i\v cek, for numerous suggestions. I am grateful to Ond\v rej Je\v zil, who helped clarify several technical arguments, especially in the final section. Finally, I thank Moritz M\"uller, without whose recommendations the final section might not have appeared.

\bibliographystyle{plain}
\bibliography{main}

\end{document}